\def\frk{\frak}               
\def\Phi{{\frk n}}
\def\Phi{{\frk N}}
\def\opn#1#2{\def#1{\operatorname{#2}}} 
\opn\chara{char} \opn\length{\ell} \opn\pd{pd} \opn\rk{rk}
\opn\projdim{proj\,dim} \opn\injdim{inj\,dim} \opn\rank{rank}
\opn\depth{depth} \opn\grade{grade} \opn\height{height}
\opn\embdim{emb\,dim} \opn\codim{codim}
\opn\Tr{Tr} \opn\bigrank{big\,rank}
\opn\superheight{superheight}\opn\lcm{lcm}
\opn\trdeg{tr\,deg}
\opn\reg{reg} \opn\lreg{lreg} \opn\ini{in} \opn\lpd{lpd}
\opn\size{size}
\opn\div{div} \opn\Div{Div} \opn\cl{cl} \opn\Cl{Cl}
\opn\Spec{Spec} \opn\Supp{Supp} \opn\supp{supp} \opn\Sing{Sing}
\opn\Ass{Ass} \opn\Min{Min} \opn\Shad{Shadow}
\opn\Ann{Ann} \opn\Rad{Rad} \opn\Soc{Soc}
\opn\Im{Im} \opn\Ker{Ker} \opn\Coker{Coker} \opn\Am{Am}
\opn\Hom{Hom} \opn\Tor{Tor} \opn\Ext{Ext} \opn\End{End}
\opn\Aut{Aut} \opn\id{id}
\opn\nat{nat}
\opn\pff{pf}
\opn\Pf{Pf} \opn\GL{GL} \opn\SL{SL} \opn\mod{mod} \opn\ord{ord}
\opn\Gin{Gin} \opn\Hilb{Hilb}
\opn\aff{aff} \opn\con{conv} \opn\relint{relint} \opn\st{st}
\opn\lk{lk} \opn\cn{cn} \opn\core{core} \opn\vol{vol}
\opn\link{link} \opn\star{star}
\opn\gr{gr}
\def\pot#1#2{#1[\kern-0.28ex[#2]\kern-0.28ex]}
\opn\dirlim{\underrightarrow{\lim}}
\opn\inivlim{\underleftarrow{\lim}}
\def\Implies{\ifmmode\Longrightarrow \else
        \unskip${}\Longrightarrow{}$\ignorespaces\fi}
\def\implies{\ifmmode\Rightarrow \else
        \unskip${}\Rightarrow{}$\ignorespaces\fi}
\def\iff{\ifmmode\Longleftrightarrow \else
        \unskip${}\Longleftrightarrow{}$\ignorespaces\fi}
\newtheorem{Theorem}{Theorem}[section]
\newtheorem{Proposition}[Theorem]{Proposition}
\newtheorem{Remark}[Theorem]{Remark}
\newtheorem{Example}[Theorem]{Example}
\newtheorem{Definition}[Theorem]{Definition}
\let\epsilon\varepsilon
\let\phi=\varphi
\let\kappa=\varkappa
\def\qed{\ifhmode\textqed\fi
      \ifmmode\ifinner\quad\qedsymbol\else\dispqed\fi\fi}
\def\textqed{\unskip\nobreak\penalty50
       \hskip2em\hbox{}\nobreak\hfil\qedsymbol
       \parfillskip=0pt \finalhyphendemerits=0}
\def\dispqed{\rlap{\qquad\qedsymbol}}
\opn\dis{dis}
\def\pnt{{\raise0.5mm\hbox{\large\bf.}}}
\opn\Lex{Lex}
\begin{document}

%


\title{Quasi $f$-Ideals  $^{*}$  }

\author{ Hasan Mahmood$^{1}$, Fazal Ur Rehman$^{1}$, Thai Thanh Nguyen $^{3}$, Muhammad Ahsan Binyamin $^{2}$
}
\thanks{\noindent $^{*}$ The first and the last authors are supported by the Higher Education Commission of Pakistan for this research (Grant no. 7515).\\
\noindent $^{1}$Government College University Lahore, Pakistan. $^{2}$Government College University Faisalabad, Pakistan.
$^{3}$ Tulane University, USA; Hue University, College of Education, Vietnam.\\
{\em E-mails }: hasanmahmood@gcu.edu.pk,
fazalqau@gmail.com, ahsanbanyamin@gmail.com,  tnguyen11@tulane.edu }
\maketitle
\begin{abstract}
  The notion of $f$-ideals is recent and has been studied in the papers \cite{deg2} \cite{degd}, \cite{Adam}, \cite{tswu1}, \cite{tswu2}, \cite{tswuPubl}, \cite{fgraph}, \cite{fsimp}  and \cite{fnote}. In this paper, we have generalized the idea of $f$-ideals to quasi $f$-ideals. This extended class of ideals is much bigger than the class of all $f$-ideals. Apart from giving various characterizations of quasi $f$-ideals of degree 2,  we have determined all the minimal primes ideals of these ideals. Moreover, construction of quasi $f$-ideals of degree 2 has been described; the formula for computing Hilbert function and Hilbert series of the polynomial ring modulo quasi $f$-ideal has been provided.
 \vskip 0.4 true cm
\noindent
  {\it Key words: }  $f$-vector; facet complex; Stanley-Reisner complex; quasi $f$-ideal;\\
   {\it 2010 Mathematics Subject Classification}:\ \ \ 13F20, 05E45, 13F55, 13C14.\\
\end{abstract}

\section{Introduction}

The notion of $f$-vector has a fundamental importance in algebraic, topological, and combinatorial study of simplicial complexes and polytopes. It has been studied since the time of Leonhard Euler, for example, see \cite{CMRBH}, \cite{TP} and \cite{HHBINOM}. The $f$-ideals, which were first introduced in \cite{deg2}, involve the idea of $f$-vectors of two important simplicial complexes. More precisely, a square-free monomial ideal $I$ of the polynomial ring $R=k[x_{1},x_{2},...,x_{n}]$ (where $k$ is a field) is an $f$-ideal if and only if the $f$-vector of the facet complex  of $I$ coincides with the $f$-vector of its Stanley-Reisner complex. These ideal were first studied in \cite{deg2}, where the authors gave a characterization of $f$-ideals of degree 2.  This characterization was somehow algebraic in its nature as it required the square-free monomial ideal of $R$ to be unmixed of height $n-2$. The definition of $f$-ideal is a blend of combinatorics, algebra, and topology. In order to characterize $f$-ideals for any degree, it was needed to see it through combinatorial and topological aspects too. The characterization of $f$-ideals for homogeneous unmixed square-free monomial ideals of any degree $d$  was given in \cite{degd}, in which combinatorial aspects were also considered. Later on, the notion of $f$-graphs (in \cite{fgraph}) and $f$-simplicial complex (in \cite{fsimp}) was introduced. These notions have been studied for it various properties in the papers \cite{deg2}, \cite{degd}, \cite{Adam}, \cite{tswu1}, \cite{tswu2}, \cite{tswuPubl}, \cite{fgraph}, \cite{fsimp} and \cite{fnote}.\\

In Computational Algebraic Geometry and Commutative Algebra, the notion of Hilbert polynomial and  Hilbert series are very useful and important invariants of any finitely generated standard graded algebra over some field. These encode much useful information and are the easiest way for the computation of degree and dimension of an algebraic variety defined through explicit polynomials. The Theorem 6.7.2 and the proposition 6.7.3 of \cite{v} tell that Hilbert function, and Hilbert series of the Stanley-Reisner ring can be computed through the $f$-vector of non-face complex. It is clear that finding $f$-vector of the facet complex of some square-free monomial ideal is much simpler than computing $f$-vector of its non-face complex. One importance of studying $f$-ideals lies in the fact that Hilbert series of polynomial ring modulo any $f$-ideal can be computed directly by looking at the $f$-vector of its facet complex only. However, the class of $f$-ideals is not that large; in addition,  $f$-ideals exist in $R=k[x_{1},x_{2},...,x_{n}]$ only for special $n$'s. So far no criterion exists in the facet ideal theory which helps us in computing Hilbert function and Hilbert series of the polynomial ring modulo any square-free monomial ideal by using the $f$-vector of the facet complex of $I$.\\

 \indent The motivation of writing this article is to extend the class of those ideals for which the Hilbert function and the Hilbert series of $R/I$ can be computed through the $f$-vector of their facet complex. The idea is to read off the $f$-vector of the non-face complex of $I$ with the help of $f$-vector of its facet complex. For example, consider the ideal $I= \langle x_1x_2, x_3x_4, x_1x_3x_5, x_2x_4x_5 \rangle$ in the polynomial ring  $R=k[x_{1},x_{2},x_{3},x_{4},x_{5}]$. This ideal $I$ is $f$-ideal (see \cite{tswu2}); the common $f$-vector of the facet complex and the non-face complex of $I$ is $(5,8,2)$. Then, by \cite[Theorem 6.7.2]{v}, the Hilbert series of $R/I$ is equal to $\frac{1z^0}{(1-z)^0}+\frac{5z^1}{(1-z)^1}+\frac{8z^2}{(1-z)^2}+\frac{2z^3}{(1-z)^3}$. However, in the same ring, the ideal $J= \langle x_1x_2x_4,x_1x_2x_5,x_1x_4x_5,x_2x_3x_5,x_3x_4x_5\rangle$ is not $f$-ideal because the $f$-vector of its facet complex is $(5,9,10)$ and the $f$-vector of its Stanley-Reisner complex is $(5,10,10)$. Although $J$ is not $f$-ideal, yet we can still express the Hilbert series of $R/J$ in terms of the $f$-vector of its facet complex in the following manner: $\frac{1z^0}{(1-z)^0}+\frac{(5+0)z^1}{(1-z)^1}+\frac{(9+1)z^2}{(1-z)^2}+\frac{(10+0)z^3}{(1-z)^3}$ , keeping in mind that $(0,1,0)$ is the difference vector. The key point is to control this difference vector.  Once we are able to control the difference of $f$-vectors of these two complexes, we can obviously achieve the target of expressing Hilbert series of polynomial ring modulo the ideal. It is natural to name the ideal $J$ as  'quasi $f$-ideal of type $(0,1,0)$'. Before giving a systematic definition of quasi $f$-ideals, we would like to remark that every $f$-ideal will turn out to be quasi $f$-ideal of type $0$ vector; moreover, unlike $f$-ideals, these ideals can be found in any polynomial ring in any number of variables.\\

 \indent This paper is organized as follows: section 2 is devoted to recalling some basic definitions to make this paper self-explanatory. In the third section, we give a systematic definition of quasi $f$-ideal supported with some examples. The section 4  supplies two different characterizations of equigenerated quasi $f$-ideals of degree $2$, given in Theorem 4.1 and Theorem 4.3; the proposition 4.4 answers the question that which ordered pairs of $\mathbb{Z}^2 $ can be realized as the type of some quasi $f$-ideals. The Theorem 4.5 gives all the minimal prime ideals of any homogeneous quasi $f$-ideal of degree $2$. Then a construction of these ideals has been given in the proposition 4.7; a formulation of Hilbert function and Hilbert series for these ideals is given in Theorem 4.9 and Theorem 4.10 respectively.

\section{Basic Set Up}

This section entails basic definitions and concepts which make this
article self-contained. Throughout this paper, the character $k$ represents a field, and $R$ is a polynomial ring over $k$ in $n$ variables $x_1,x_2,\dots,x_n$. We start with the following definition of a
simplicial complex.

\begin{Definition}{\em
Let $V=\{v_{1},v_{2},...,v_{n}\}$ be a vertex set and $\Delta$ be a
subset of $P(V)$. We say $\Delta$ a simplicial complex on $V$ if,
$(i)$ $\{v_{i}\}\in\Delta$ for all $i\in\{1,2,\ldots,n\}$, and,
$(ii)$ subsets of every element of $\Delta$ belong to $\Delta$.}

\end{Definition}
The
members of $\Delta$ are known as faces; the dimension of a face $F$
is one less than the cardinality of $F$. The maximal faces under
inclusion are known as facets. It is clear that a simplicial complex can be determined by its facets. If $F_1,F_2,\ldots,F_r$ are the facets of $\Delta$, we write $\Delta=\langle F_{1},F_{2},...,F_{r}\rangle$ to say that $\Delta$ is generated by these $F_i's$. The dimension of a simplicial complex $\Delta$ is
defined as follows: $$\dim(\Delta)=\max\{\dim(F)|F
\text{ is facet in }
\Delta\}$$
\begin{Remark}{\em  A simplicial complex whose facets can have at most dimension $1$ is actually a simple graph, where the $1$-dimensional facets are termed as edges, and the $0$-dimensional facets are isolated vertices. A graph is usually denoted by $G$ with $E(G)$ being the set of its edges. }

\end{Remark}
%

We need to recall few more concepts from the literature before giving the definition of (quasi) $f$-ideals.

\begin{Definition}
{\em The $f$-vector of a $d$-dimensional simplicial complex $
\Delta$ is an element $(f_{0},f_{1},...,f_{d})\in {\mathbb{Z}}^{d+1}$, where
$f_{i}=\left\vert \left\{ F\in \Delta :\dim (F)=i\right\}
\right\vert $ for all $i\in\{0,1,2,...,d\}$}. The $f$-vector of $\Delta$ is denoted by $f(\Delta )$.
\end{Definition}

\begin{Definition}({\bf{facet complex and non-face complex}})
{\em Consider a square-free monomial ideal $I$ of $R=k[x_1,x_2,\ldots,x_n]$ with $G(I)=\{m_{1},m_{2},...,m_{r}\}$ as its unique minimal monomial system of generators. Then $F_{1},F_{2},..., F_{r}$ are the facets of the facet complex of $I$ on the vertices $v_1,v_2,\ldots,v_n$, where $F_{i}=\{v_{j}:x_{j}$ divide
$m_{i}\}$, where $i\in \{1,2,...,r\}$. The facet complex of $I$ is denoted by $\delta _{\mathcal{F}}(I)$.
And, the non-face complex of $I$  is a simplicial complex on $V=\{v_1,v_2,\ldots v_n\}$ such that a subset
$\{v_{i_{1}},v_{i_{2}},...,v_{i_{k}}\}$ of $V$ is a face of this non-face complex if and only if the corresponding monomial
$x_{i_{1}}x_{i_{2}}\cdots x_{i_{k}}$ does not belong to $I$. The non-face complex of $I$ is also known as the Stanley-Reisner complex of $I$, and we denote it by $\delta
_{\mathcal{N}}(I)$. }
\end{Definition}

\begin{Definition} ({\bf{facet ideal and non-face ideal}})
{\em Consider a simplicial $\Delta$ on the vertex $V=\{v_1,v_2,\ldots,v_n\}$ which is generated by the facets $F_1,F_2,\ldots,F_r$. The
facet ideal of $\Delta $ is
square-free monomial ideal of $R=k[x_1,x_2,\ldots,x_n]$ which is minimally generated by
the monomials $m_{1},m_{2},...,m_{r}$ such that
$m_{i}=\prod\limits_{v_{j}\in F_{i}}x_{j}$, where $i\in
\{1,2,...,r\}$. The facet ideal of $\Delta$ is denoted by $I_{\mathcal{F}}(\Delta )$. And, the non-face ideal of $\Delta$ is another square-free
monomial ideal of $R$
such that any monomial $x_{i_{1}}x_{i_{2}}...x_{i_{k}}$ is in the non-face ideal if and only if  the corresponding subset
 $\{v_{i_{1}},v_{i_{2}},...,v_{i_{k}}\}$ of $V$ does not belong to the complex $\Delta$. The non-face ideal of $\Delta$ is also known as its Stanley-Reisner ideal, and it is written as $I_{\mathcal{N}}(\Delta )$. }
\end{Definition}

\begin{Definition}
{\em A square-free monomial ideal $I$ of the polynomial ring $R$ is said to be an $f$-ideal if and only if the $f$-vector of the facet complex  of $I$ coincides with the $f$-vector of its Stanley-Reisner complex, i.e. $f(\delta _{\mathcal{F}}(I))=f(\delta _{\mathcal{N}}(I))$. A simplicial complex is said to be an $f$-simplicial complex if its facet ideal is an $f$-ideal. A $1$-dimensional $f$-simplicial complex is termed as $f$-graph for obvious reason. }
\end{Definition}

Let us place the definition and some examples of the central notion of this paper, i.e., quasi $f$-ideal, in the separate section. However, before moving to the next section, we would rather recall the definition of perfect sets of $R$. These sets are used in characterizing $f$-ideals as given in \cite[Theorem 2.3]{tswuPubl}.\\

Let $Sm(R)$ denote the set of all square-free monomials in $R$; let $Sm(R)_d$ be the set of all square-free monomials of degree $d$ in $Sm(R)$.  For a subset $T\subseteq Sm(R)$, consider the upper shadow, $\sqcup(T)$, and the lower shadow of $T$, $\sqcap(T)$, as given below:
$$\sqcup(T)=\{gx_{i} \ | \ g\in T, {x_{i}} \text{  does not divide  } g, 1\leq
i\leq n\}$$ $$\sqcap(T)=\{h \ | \ h=g/x_{i} \text{  for some  } g\in T  \text{  and some  } x_{i} \text{
with  } x_{i}|g\}$$

If, in particular, $T$ sits in $Sm(R)_d$, then $\sqcup(T)\subset Sm(R)_{d+1}$ and $\sqcap(T)\subset Sm(R)_{d-1}$. The set $T$ is then called upper perfect if $\sqcup(T)= Sm(R)_{d+1}$, and it is said to be lower perfect if its lower shadow is full, i.e., $\sqcap(T)= Sm(R)_{d-1}$. The set $T$ is called a perfect set if and only if it is lower perfect as well as upper perfect.
In general, perfect sets can have different cardinalities; for example, every subset of $Sm(R)_{d}$ containing a perfect set is again a perfect set. The smallest number among the cardinalities of perfect sets of degree ${d}$ is called the $(n,d)^{th}$ perfect number, and is denoted by
 $N{(n,d)}$. By \cite[Lemma 3.3]{tswuPubl}, for a positive $t$ and $n\geq4$, we have the following equations:

  $$ N(n,2)= \left\{
                                                                                    \begin{array}{ll}
                                                                                      t^2-t, & \hbox{ when $n=2t$;} \\
                                                                                      t^2, & \hbox{ when $n = 2t+1$.}
                                                                                    \end{array}
                                                                                  \right.
$$

 \section{Quasi $f$-Ideals: Definition and Examples}
 In this section, we have introduced the notion of quasi $f$-ideals, quasi $f$-graphs and quasi $f$-simplicial complexes. Some examples are also presented.

 \begin{Definition}{\em
Let $(a_1,a_2,\ldots,a_s)\in \mathbb{Z}^s $. A square-free monomial ideal $I$ in the polynomial ring
$R=k[x_{1},x_{2,}...,x_{n}]$  is said to be a quasi $f$-ideal
of type $(a_1,a_2,\ldots,a_s)$ if and only if  $f(\delta _{\mathcal{N}}(I))- f(\delta _{\mathcal{F}}(I))=\left(
a_1,a_2,\ldots,a_s\right) $.}
\end{Definition}

If $I$ is quasi $f$-ideal of type $(a_1,a_2,\ldots,a_s)$ in the ring $R=k[x_{1},x_{2,}...,x_{n}]$, then the definition requires that both the complexes of $I$, the facet complex and the Stanley-Reisner complex, should be $s$-dimensional. It means that $s\leq n$; in fact,  $s+1 = \max\{\deg(u) : u\in G(I)\}$, where G(I) is the set of minimal generators of $I$. Moreover, as a consequence of Kruskal-Katona theorem, we can say that not every $s$-tuple of integers can be realized as type of some quasi $f$-ideal. However, every $f$-ideal is a quasi $f$-ideal whose type is a zero vector, and obviously any quasi $f$-ideal with type some non-zero vector can not be $f$-ideal. But we would like to mention that the class of quasi $f$-ideals is much more bigger than the class of $f$-ideals; moreover, unlike $f$-ideals, examples of quasi $f$-ideals can be found in $R=k[x_{1},x_{2,}...,x_{n}]$, for any $n$.


 Let us now consider some examples of quasi $f$-ideals of some types below.

 \begin{Example}{\em
Every $f$-ideal is quasi $f$-ideal of type {\bf{\underline{0}}}. We would like the readers to see \cite{deg2},\cite{degd}, \cite{tswu2},\cite{tswuPubl} and \cite{fgraph} to know more about $f$-ideals and $f$-graphs.}
\end{Example}

\begin{Example}
{\em The ideal $J=\langle x_1x_2x_4,x_1x_2x_5,x_1x_4x_5,x_2x_3x_5,x_3x_4x_5 \rangle$ of the polynomial ring in $5$ variables, which has been discussed in the introduction of this paper, is quasi $f$-ideal of type $(0,1,0)$.}
\end{Example}

\begin{Example}{\em
Consider the monomial ideal $I=\langle x_1x_2x_6, x_1x_2x_7,x_1x_3x_4,x_1x_3x_5,$
$x_1x_3x_6,x_1x_3x_7,x_1x_4x_5,x_1x_4x_6,x_1x_5x_7,x_1x_6x_7,x_2x_4x_5,x_2x_4x_7,x_2x_6x_7,x_3x_4x_6,x_3x_5x_7,$
$x_2x_5x_6,x_5x_6x_7 \rangle $ of degree $3$ in the ring $R=k[x_{1},x_{2},x_{3},x_{4},x_{5},x_{6},x_{7}]$. The facet complex and the non-face complex of $I$ are
$$\delta _{\mathcal{F}}(I)=\langle\{v_{1},v_{2},v_{6}\}, \{v_{1},v_{2},v_{7}\}, \{v_{1},v_{3},v_{4}\}, \{v_{1},v_{3},v_{5}\},\{v_{1},v_{3},v_{6}\}, \{v_{1},v_{3},v_{7}\}$$
$$\{v_{1},v_{4},v_{5}\}, \{v_{1},v_{4},v_{6}\}, \{v_{1},v_{5},v_{7}\}, \{v_{1},v_{6},v_{7}\}, \{v_{2},v_{4},v_{5}\}, \{v_{2},v_{4},v_{7}\}$$ $$\{v_{2},v_{6},v_{7}\}, \{v_{3},v_{4},v_{6}\}, \{v_{3},v_{5},v_{7}\}, \{v_{2},v_{5},v_{6}\}, \{v_{5},v_{6},v_{7}\}\rangle$$ and
$$\delta _{\mathcal{N}}(I)=\langle\{v_{1},v_{2},v_{3}\}, \{v_{1},v_{2},v_{4}\}, \{v_{2},v_{3},v_{4}\}, \{v_{2},v_{3},v_{6}\},\{v_{1},v_{5},v_{6}\}, \{v_{3},v_{5},v_{6}\}$$
$$\{v_{3},v_{4},v_{5}\}, \{v_{2},v_{5},v_{7}\}, \{v_{4},v_{5},v_{7}\}, \{v_{4},v_{6},v_{7}\}, \{v_{3},v_{6},v_{7}\}, \{v_{4},v_{5},v_{6}\}, \{v_{2},v_{4},v_{6}\}$$ $$\{v_{3},v_{4},v_{7}\}, \{v_{1},v_{4},v_{7}\}, \{v_{2},v_{3},v_{7}\}, \{v_{2},v_{3},v_{5}\}, \{v_{1},v_{2},v_{5}\}\rangle$$
this implies that $f(\delta _{\mathcal{F}}(I))=(7,20,17)$ and $f(\delta _{\mathcal{N}}(I)))=(7,21,18)$, it means that $I$ is a quasi $f$-ideal of the type $(0,1,1)$.}
\end{Example}

\section{Quasi $f$-Ideals of Degree 2}
 Now we want to characterize those quasi $f$-ideals in the ring $R=k[x_{1},x_{2},...,x_{n}]$ whose minimal generating set is a subset of the set of all square-free monomial of degree 2 in $R$. We may call such ideals as equigenerated (or pure) square-free monomial ideals of degree 2. Obviously, the type of such quasi $f$-ideals will be ordered pair $(a,b)\in\mathbb{Z}^2$. However, since we are to consider only those ideals for which the facet complex and the non-face complex have the same vertex set $V=\{v_1, v_2,v_3,\ldots,v_n\}$, so we are bound to consider only those ideals whose support is full, i.e. an ideal $I$ of $R=k[x_{1},x_{2},...,x_{n}]$ for which $\bigcup_{u\in G(I)}supp(u)=\{x_1,x_2,\ldots,x_n\}$. This means that if $I$ is quasi $f$-ideal of type $(a,b)$, then $a$ must be zero in the ordered pair $(a,b)$. Thus any quasi $f$-ideal of degree 2 must be of the type $(0,b)$. The following theorem characterizes all such ideals.

 \begin{Theorem}{\em
Let $I$ be an equigenerated square-free monomial ideal of $R=k[x_{1},x_{2},...,x_{n}]$ of degree $2$, and let  $G(I)=\{u_1,u_2,\ldots,u_r\}$ be the unique minimal generating set of $I$. Then $I$ is quasi $f$-ideal of type $(0,b)\in \mathbb{Z}^2$ if and only if the following conditions hold true:
\begin{enumerate}
    \item(i) $ht(I)=n-2$;
    \item ${n\choose 2}\equiv  \begin{cases}
                           0\  (mod\  2) & \text{if b is even} \\
                           1\  (mod\  2) & \text{if b is odd}
                         \end{cases}
$;
    \item $|G(I)|=\frac{1}{2}({n\choose 2}-b)$.
\end{enumerate}

}
\end{Theorem}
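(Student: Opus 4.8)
The plan is to translate the entire statement into the language of the underlying graph. An equigenerated degree-$2$ square-free monomial ideal is exactly the edge ideal $I=I(G)$ of the graph $G$ whose edges $\{v_i,v_j\}$ are the minimal generators $x_ix_j$. Under this dictionary the facet complex $\delta_{\mathcal F}(I)$ is just $G$ viewed as a $1$-dimensional complex, while the Stanley--Reisner complex $\delta_{\mathcal N}(I)$ is the independence complex of $G$: a set $\{v_{i_1},\dots,v_{i_k}\}$ fails to lie in $I$ precisely when no pair among its vertices is an edge of $G$. First I would record the two $f$-vectors this yields. Because the support is full and no single variable lies in $I$, both complexes have $n$ vertices, so $f_0=n$ in each case; the facet complex has $f_1^{\mathcal F}=|G(I)|$ (one edge per generator), and the independence complex has $f_1^{\mathcal N}=\binom{n}{2}-|G(I)|$, counting the non-edges of $G$.

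The key structural step, which I would isolate separately, is that the type lying in $\mathbb{Z}^2$ already forces both complexes to be exactly $1$-dimensional, and that for $\delta_{\mathcal N}(I)$ this is equivalent to condition (i). Indeed $\dim\delta_{\mathcal N}(I)=\alpha(G)-1$, where $\alpha(G)$ is the independence number of $G$, and since the minimal primes of the edge ideal are generated by the minimal vertex covers of $G$ one has $\height(I)=n-\alpha(G)$. Hence $\dim\delta_{\mathcal N}(I)=1$ iff $\alpha(G)=2$ iff $\height(I)=n-2$; the facet complex is automatically $1$-dimensional once $I\neq 0$. Recognizing that the minimal-prime / vertex-cover correspondence is exactly the content of (i) is the main conceptual point here, rather than any computational difficulty.

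With (i) in hand the remainder is a direct computation that I would run in both directions. Subtracting the $f$-vectors gives
$$f(\delta_{\mathcal N}(I))-f(\delta_{\mathcal F}(I))=\left(0,\ \binom{n}{2}-2|G(I)|\right),$$
so $I$ is of type $(0,b)$ exactly when $b=\binom{n}{2}-2|G(I)|$. Solving for the number of generators gives $|G(I)|=\frac{1}{2}\bigl(\binom{n}{2}-b\bigr)$, which is condition (iii), and integrality of this count forces $\binom{n}{2}\equiv b\pmod 2$, which is condition (ii). For the forward implication I would read (i), (ii), (iii) off as necessary consequences; for the converse I would assume (i)--(iii), use (i) to guarantee the difference vector lives in $\mathbb{Z}^2$ with vanishing first entry, and substitute (iii) into the displayed identity to recover the second entry $b$.

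I expect the only genuine obstacle to be the dimension bookkeeping. One must argue cleanly that ``type in $\mathbb{Z}^2$'' is synonymous with ``both complexes $1$-dimensional'': the vanishing first coordinate amounts to full support, so that $f_0^{\mathcal N}=f_0^{\mathcal F}=n$ (this is consistent with, and in fact forced by, the standing full-support hypothesis), while the $1$-dimensionality of $\delta_{\mathcal N}(I)$ is precisely the height condition (i) via $\height(I)=n-\alpha(G)$. Once the independence-complex description and this height formula are in place, the surviving arithmetic for (ii) and (iii) is routine.
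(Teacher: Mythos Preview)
Your proof is correct and follows the same logical structure as the paper's: both establish that the one-dimensionality of $\delta_{\mathcal N}(I)$ is equivalent to $\height(I)=n-2$, and then compute the $f_1$-difference as $\binom{n}{2}-2|G(I)|$ to read off conditions (ii) and (iii). The only distinction is packaging: where the paper invokes \cite[Corollary~6.3.5]{v} for $\dim\delta_{\mathcal N}(I)=n-\height(I)-1$ and \cite[Lemma~3.2]{deg2} for $f_1(\delta_{\mathcal N}(I))=\binom{n}{2}-r$, you unpack these same facts into their graph-theoretic content via the independence complex and the vertex-cover/minimal-prime correspondence, making your version more self-contained but not substantively different.
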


\begin{proof}
First suppose that $I$ is quasi $f$-ideal of type $(0,b)$. This means that $dim (\delta _{\mathcal{F}}(I))$ $=dim (\delta _{\mathcal{N}}(I))$ and $f(\delta _{\mathcal{N}}(I))- f(\delta _{\mathcal{F}}(I))=\left(
0,b\right) $. But as $I$ is equigenerated square-free monomial ideal of degree 2, we have that $dim (\delta _{\mathcal{F}}(I))=1$. Also, from \cite[Corollary  6.3.5]{v},  we know that $dim (\delta _{\mathcal{N}}(I)))=n-ht(I)-1$. Now the equality of dimensions of these two complexes yields that $ht(I)=n-2$. Moreover, as $|G(I)|=r$,
$f_{1}(\delta _{\mathcal{F}}(I))=r$. Therefore, by \cite[Lemma 3.2]{deg2}, we have $f_{1}(\delta _{\mathcal{N}}(I))= {n\choose 2}-r$. The fact that $I$ is a quasi $f$-ideal of the type $(0,b)$ also gives us the equation: $f_{1}(\delta _{\mathcal{N}}(I))-f_{1}(\delta _{\mathcal{F}}(I))=b$, hence, ${n\choose 2}-r-r=b$  or  ${n\choose 2}-2r=b$. This further implies that $|G(I)| =r=\frac{1}{2}({n\choose 2}-b)$. Note that the equation ${n\choose 2}-2r=b$ also tells that the parity of ${n\choose 2}$ is same as the parity of $b$, and hence the condition $(ii)$.\\
\indent Conversely, we suppose that the conditions $(i)$, $(ii)$ and $(iii)$ are satisfied. As $I$ is equigenerated square-free monomial ideal of degree 2, $dim (\delta _{\mathcal{F}}(I)))=1$. The condition $(i)$ together with the fact that
 $dim (\delta _{\mathcal{N}}(I)))=n-ht(I)-1$ implies that the complex $\delta _{\mathcal{N}}(I)$ also has dimension 1. As both the complexes, $\delta _{\mathcal{F}}(I)$ and $\delta _{\mathcal{N}}(I)$, have the same vertex set $V=\{v_1,v_2,\ldots,v_n\}$, so $f_{0}(\delta _{\mathcal{F}}(I)= f_{0}(\delta _{\mathcal{N}}(I)= n$. This shows that $f_{0}(\delta _{\mathcal{N}}(I))-  (f_{0}(\delta _{\mathcal{F}}(I))= 0$. The other two conditions together with \cite[Lemma 3.2]{deg2} give the following: $$f_{1}(\delta _{\mathcal{N}}(I))-  (f_{1}(\delta _{\mathcal{F}}(I))= {n\choose 2}-|G(I)|-|G(I)|={n\choose 2}-2|G(I)|$$

 $$= {n\choose 2}-2 \frac{1}{2}({n\choose 2}-b)=b$$
 Thus $I$ is quasi $f$-ideal of type $(0,b)$.
\end{proof}

\begin{Remark}{\em
While characterizing $f$-ideals of degree $2$, we were bound to deal with only those polynomial rings in $n$ variables for which $n\choose 2$ was even. But this restriction is no longer needed for the case of quasi $f$-ideals of degree $2$, as the above theorem also considers the situation when $n\choose 2$ is odd. However, it  imposes some restrictions on the value of $b$; it says that if $I$ is quasi $f$-ideal of degree $2$ in the polynomial ring $R=k[x_{1},x_{2},...,x_{n}]$ having type $(0,b)$, then the parity of $n\choose 2$ and $b$ must be same. So, if $n=4k$ or $n=4k+1$ and $b$ is odd, then there will not be any quasi $f$-ideal of degree $2$ of type $(0,b)$. Similarly for $n=4k+2$ or $n=4k+3$, there will not be any quasi $f$-ideal of type $(0,b)$ with even $b$. Moreover, since $b=f_{1}(\delta _{\mathcal{N}}(I))-f_{1}(\delta _{\mathcal{F}}(I))$, \cite[Lemma 3.2]{deg2} shows that $|b|$ can not be greater than $n\choose 2$. Also, as the height of a quasi $f$-ideal of degree $2$ has to be $n-2$, $|b|\neq {n\choose 2}$.
 }
\end{Remark}

 The next theorem gives a combinatorial characterization of quasi $f$-ideals of degree $2$. This theorem involves the notion of an upper perfect set.

\begin{Theorem}
{\em Let $I$ be an equigenerated square-free monomial ideal of the polynomial ring  $R=k[x_{1},x_{2},...,x_{n}]$ of degree $2$, and let  $G(I)$ be its minimal generating set. Then $I$ is quasi $f$-ideal of type $(0,b)\in \mathbb{Z}^2$ (where $|b|<{n\choose 2}$) if and only if the following conditions are satisfied:
\begin{enumerate}
    \item the parity of ${n\choose 2}$ is same as the parity of $b$,
    \item the set $G(I)$ is upper perfect with $|G(I)|=\frac{1}{2}({n\choose 2}-b)$.
\end{enumerate}
}
\end{Theorem}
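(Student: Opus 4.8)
The plan is to derive this combinatorial characterization from the algebraic one already established in Theorem 4.1. The key observation is that conditions (1) and (2) here must be shown equivalent to the three conditions $ht(I)=n-2$, the parity condition on $\binom{n}{2}$, and $|G(I)|=\frac{1}{2}(\binom{n}{2}-b)$ of Theorem 4.1. Since the parity condition and the cardinality condition appear verbatim in both theorems, the real content is to prove that, \emph{given} $|G(I)|=\frac{1}{2}(\binom{n}{2}-b)$, the height condition $ht(I)=n-2$ is equivalent to $G(I)$ being upper perfect. So I would begin by restating the goal as: under the standing hypothesis that $I$ is equigenerated of degree $2$ with $|G(I)|=\frac{1}{2}(\binom{n}{2}-b)$, we have $ht(I)=n-2 \iff G(I)$ is upper perfect.

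First I would recall what $ht(I)=n-2$ means combinatorially. For a square-free monomial ideal, the height equals the minimum cardinality of a minimal vertex cover of the facet complex (equivalently, $n$ minus the dimension of the Stanley--Reisner complex minus one, via \cite[Corollary 6.3.5]{v}, which was used in Theorem 4.1). For a degree-$2$ ideal the facet complex is a graph $G$ on $\{v_1,\dots,v_n\}$ whose edges are $G(I)$, so $ht(I)$ is the minimum vertex-cover number of $G$, and $ht(I)=n-2$ says the complement graph has no triangle-free... — more precisely, $ht(I)=n-2$ is equivalent to saying the Stanley--Reisner complex is $1$-dimensional, i.e. that the non-face complex contains no $2$-dimensional face. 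Translating: no three variables $x_i,x_j,x_k$ have all three products $x_ix_j, x_ix_k, x_jx_k$ outside $I$; equivalently every triangle on the vertex set meets an edge of $G$.

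Next I would connect this to upper perfectness. Recall $G(I)\subseteq Sm(R)_2$ is upper perfect when $\sqcup(G(I))=Sm(R)_3$, i.e. every square-free degree-$3$ monomial $x_ix_jx_k$ is divisible by some generator, which means every $3$-subset $\{v_i,v_j,v_k\}$ contains an edge of $G$. This is exactly the condition that every triangle meets $G$, which is precisely the combinatorial meaning of $ht(I)=n-2$ identified above. So the equivalence is essentially a restatement once both conditions are unfolded. I would make this rigorous by showing both directions: if $\dim \delta_{\mathcal N}(I)=1$ then no $3$-set is a face, so every degree-$3$ square-free monomial lies in $I$, giving $\sqcup(G(I))=Sm(R)_3$; conversely upper perfectness forces every $3$-set to contain an edge, so $\delta_{\mathcal N}(I)$ has no $2$-face, hence dimension $1$ (it is at least $1$ since the complex is nonempty with full support), which by \cite[Corollary 6.3.5]{v} yields $ht(I)=n-2$.

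The main obstacle, and the point requiring the most care, is the bookkeeping around dimensions and the standing assumption $|b|<\binom{n}{2}$: one must ensure $\delta_{\mathcal N}(I)$ is genuinely $1$-dimensional and not of lower dimension, and that $\delta_{\mathcal F}(I)$ has the same vertex set, so that the "$0$" in the type $(0,b)$ is automatic. I would handle this exactly as in Theorem 4.1, invoking the full-support hypothesis to guarantee $f_0=n$ for both complexes and using $|b|<\binom{n}{2}$ to rule out the degenerate case where the non-face complex collapses. Once the dimension equals $1$ is secured on both sides, conditions (1) and (2) here deliver conditions (ii) and (iii) of Theorem 4.1 directly, while the upper-perfect-to-height translation supplies condition (i); and conversely. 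Thus the proof reduces to citing Theorem 4.1 plus the single combinatorial translation "upper perfect $\iff$ every $3$-subset contains an edge $\iff ht(I)=n-2$."
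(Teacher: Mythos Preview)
Your proposal is correct and follows essentially the same route as the paper: reduce to Theorem~4.1 and show that, under the cardinality hypothesis, upper perfectness of $G(I)$ is equivalent to $\dim\delta_{\mathcal N}(I)=1$ (hence $ht(I)=n-2$), with the bound $|b|<\binom{n}{2}$ used to force the dimension up to~$1$. One small wording slip: ``full support'' alone only guarantees $\dim\delta_{\mathcal N}(I)\ge 0$; it is the inequality $|G(I)|<\binom{n}{2}$ (from $b>-\binom{n}{2}$) that produces a $1$-face, exactly as you note in the following paragraph.
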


\begin{proof}
Let us first suppose that $I$ is quasi $f$-ideal of type $(0,b)$. The condition $(1)$ is the condition $(2)$ of Theorem 4.1; clearly, $|G(I)|=\frac{1}{2}({n\choose 2}-b)$, as $I$ is a quasi $f$-ideal of type $(0,b)$. Now we only have to show that $G(I)$ is upper perfect. This is equivalent to say that $I$ contains all square-free monomial of degree $3$. Indeed it is so, otherwise if there is some monomial (say) $x_{i_1}x_{i_2}x_{i_3}$ which does not belong to $I$, then the corresponding subset $\{v_{i_1},v_{i_2},v_{i_3}\} \in \delta _{\mathcal{N}}(I) $. This means that $dim (\delta _{\mathcal{N}}(I))\geq 2$, which is a contradiction to the fact that $dim (\delta _{\mathcal{N}}(I))=1$.\\
\indent Conversely, note that the conditions $(2)$ and $(3)$ of Theorem 4.1 directly follows from the assumption. We only have to show that $ht(I)=n-2$. As the ideal $I$ contains all square-free monomials of degree 3 and higher, so $dim (\delta _{\mathcal{N}}(I))\leq 1$. The fact that $|b|<{n\choose 2}$ yields that
$\delta _{\mathcal{N}}(I))$ is $1$-dimensional complex, which implies that $ht(I)=n-2$. Thus $I$ is quasi $f$-ideal of type $(0,b)$.
\end{proof}

It will be interesting to determine the bounds on the values of $b$, which is given in the proposition below.

\begin{Proposition}
{\em Let $I$ be a quasi $f$-ideal of degree $2$ and type $(0,b)$ in the polynomial ring $R=k[x_{1},x_{2},...,x_{n}]$. Then the following holds true:
$$ -{n\choose 2}+2\leq b\leq {n\choose 2}-2N(n,2) $$}
\end{Proposition}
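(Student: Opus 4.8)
The plan is to convert the asserted bounds on $b$ into bounds on the number of minimal generators $r=|G(I)|$, and then read off the inequalities from the identity furnished by Theorem 4.1. Since $I$ is a quasi $f$-ideal of degree $2$ and type $(0,b)$, that theorem gives $|G(I)|=\frac{1}{2}({n\choose 2}-b)$, equivalently ${n\choose 2}-2r=b$ with $r=|G(I)|$. Because $b$ is a strictly decreasing linear function of $r$, the value of $b$ is largest exactly when $r$ is smallest and smallest exactly when $r$ is largest. Hence it suffices to establish the two-sided bound $N(n,2)\le r\le {n\choose 2}-1$ and substitute.

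For the lower bound $b\ge -{n\choose 2}+2$, I would argue that $r\le {n\choose 2}-1$. By Theorem 4.1 we have $ht(I)=n-2$, so by \cite[Corollary 6.3.5]{v} the Stanley--Reisner complex satisfies $\dim(\delta_{\mathcal{N}}(I))=n-ht(I)-1=1$. In particular $\delta_{\mathcal{N}}(I)$ carries at least one edge $\{v_i,v_j\}$, which means the square-free monomial $x_ix_j$ does not belong to $I$. Consequently $G(I)$ is a \emph{proper} subset of $Sm(R)_2$, and since $|Sm(R)_2|={n\choose 2}$ we get $r\le {n\choose 2}-1$. Substituting into ${n\choose 2}-2r=b$ yields $b\ge {n\choose 2}-2({n\choose 2}-1)=-{n\choose 2}+2$.

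For the upper bound $b\le {n\choose 2}-2N(n,2)$, I would show $r\ge N(n,2)$, and the heart of the matter is to verify that $G(I)$ is a \emph{perfect} set. Theorem 4.3 already supplies upper perfectness of $G(I)$, so only lower perfectness remains, and this is the one point I expect to require care. For a set $T\subseteq Sm(R)_2$ the lower shadow $\sqcap(T)$ is exactly the collection of variables dividing some element of $T$; thus $\sqcap(G(I))=Sm(R)_1$ is precisely the assertion that every variable $x_i$ appears in some minimal generator, i.e. that the support of $I$ is full. But full support is the standing hypothesis imposed on quasi $f$-ideals of degree $2$ (so that $\delta_{\mathcal{F}}(I)$ and $\delta_{\mathcal{N}}(I)$ share the vertex set $\{v_1,\dots,v_n\}$). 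Hence $G(I)$ is both upper and lower perfect, so it is a perfect set, and by the very definition of $N(n,2)$ as the least cardinality among perfect sets of degree $2$ we obtain $r=|G(I)|\ge N(n,2)$. Substituting gives $b={n\choose 2}-2r\le {n\choose 2}-2N(n,2)$, which together with the previous paragraph completes the argument.
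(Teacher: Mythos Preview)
Your proof is correct and follows essentially the same route as the paper: both arguments translate the bounds on $b$ into bounds on $|G(I)|$ via the identity $b=\binom{n}{2}-2|G(I)|$, use the height condition $ht(I)=n-2$ to force $|G(I)|\le\binom{n}{2}-1$, and use perfectness of $G(I)$ together with the definition of $N(n,2)$ to force $|G(I)|\ge N(n,2)$. If anything, your treatment is slightly more careful than the paper's, since you explicitly verify lower perfectness of $G(I)$ from the full-support hypothesis, whereas the paper invokes Theorem~4.3 (which only asserts upper perfectness) and tacitly assumes the rest.
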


\begin{proof}
As $I$ is quasi $f$-ideal of degree $2$ and type $(0,b)$, the
theorem 4.3 tells us that $G(I)$ is a perfect subset of $Sm(R)_2$ with
$|G(I)|=\frac{1}{2}({n\choose 2}-b)$. Since $N(n,2)$ is the smallest
cardinality of perfect sets of degree $2$, this means that $|G(I)|\geq N(n,2)$.
$$\Rightarrow  \frac{1}{2}({n\choose 2}-b) \geq N(n,2) $$

\begin{equation}\label{1}
    \Rightarrow  b \leq {n\choose 2}-2N(n,2)
\end{equation}
Moreover, it is clear that $|G(I)|\leq {n\choose 2}$. This means that $\frac{1}{2}({n\choose 2}-b)\leq {n\choose 2}$, which gives the  inequality:
$-{n\choose 2}\leq b$. However, if $b=-{n\choose 2}$, then $|G(I)|={n\choose 2}$. The square-free monomial ideal of degree $2$ with ${n\choose 2}$ generators has height $n-1$. Since the quasi $f$-ideal of degree $2$ and type $(0,b)$ should be of height $n-2$, so the value $b=-{n\choose 2}$ is not acceptable. Also, as the parity of $b$ and $-{n\choose 2}$ has to be same, the immediate acceptable value of $b$ which is greater than $-{n\choose 2}$ would be $-{n\choose 2}+2$.  The value $b=-{n\choose 2}+2$ can be realized for by any square-free monomial ideal of degree $2$ whose generating set consists of ${n\choose 2}-1$ generators. Thus we have that  $ -{n\choose 2}+2\leq b\leq {n\choose 2}-2N(n,2) $.
\end{proof}

Now let us talk about the associated prime ideals of these ideals. Consider a quasi $f$-ideal $I$ of degree $2$ and type $(0,b)$ and let $\mathfrak{p}$ be any minimal prime ideal of $I$. By the condition $(i)$ of Theorem 4.1, we have that $ht(\mathfrak{p})\in \{n-2,n-1\}$. In the next theorem, we see that which monomial prime ideals of height $n-2$ and $n-1$ belong to $Ass(R/I)$.

\begin{Theorem}{\em Let $I$ be a quasi $f$-ideal of degree $2$ and type $(0,b)$. Then the following statements are true:\\
$(i)$ A monomial prime ideal $\mathfrak{p}$ of height $n-2$ belongs to $Ass(R/I)$ if and only if the square-free quadratic monomial $x_ix_j \notin G(I)$, where $x_i,x_j\notin \mathfrak{p} $.\\
$(ii)$ A monomial prime ideal $\mathfrak{p}$ of height $n-1$ belongs to $Ass(R/I)$ if and only if $ x_ix_j \in G(I)$ for all $j$, where $x_i\notin \mathfrak{p}$.

}
\end{Theorem}

\begin{proof}

{We will use the well-known one-to-one correspondence between facets of $\delta _{\mathcal{N}}(I)$ and the minimal vertex covers of $\delta _{\mathcal{F}}(I)$, which correspond to the minimal primes of $I$ (as given in \cite{faridi}). More precisely, $F$ is a facet of $\delta _{\mathcal{N}}(I)$ if and only if $\mathfrak{p}=(x_i \ | \ i\notin F)$ is a minimal prime of $I$.\\
Case $(i)$: Since $I$ is square-free monomial ideal, the associated primes of $I$ are precisely the minimal primes. Thus, a monomial prime ideal $\mathfrak{p}$ of height $n-2$ belongs to $Ass(R/I)$ if and only if $\{v_i,v_j\}$ is a facet of $\delta _{\mathcal{N}}(I)$, where $x_i,x_j \notin \mathfrak{p}$. This is equivalent to say that $x_ix_j \notin G(I)$, because $I$ is quasi $f$-ideal of type $(0,b)$ and degree $2$.  \\
 Case $(ii)$: Let $\mathfrak{p}$ be a monomial prime ideal of height $n-1$ such that $x_i\notin \mathfrak{p}$. Then $\mathfrak{p}$ belongs to $Ass(R/I)$ if and only if $\{v_i\}$ is a facet (isolated vertex) of $\delta _{\mathcal{N}}(I)$.  This is equivalent to say that  the sets $\{v_i,v_j\}$ do not belong to $\delta _{\mathcal{N}}(I)$  for all $j\neq i$. Equivalently,  $x_ix_j \in G(I)$ for all $j\neq i$, because $G(I)\subset Sm(R)_2$.}
\end{proof}
We now move ahead and describe how quasi $f$-ideals of degree $2$ can be constructed. In particular, we show that every ordered pair $(0,b)$, where $ -{n\choose 2}+2\leq b\leq {n\choose 2}-2N(n,2)$, can be realized as type of some quasi $f$-ideal of degree $2$.

\begin{Proposition} Let $A$ be a proper subset of $\{x_1,x_2,\ldots,x_n\}$. Then for the subset $P_A=\{x_ix_j |x_i,x_j\in A \ or\ x_i,x_j\notin A\}$ of $Sm(R)_2$, the ideal $I=\langle P_A\rangle$ is a quasi $f$-ideal in $R=k[x_1,x_2,\ldots,x_n]$ of degree $2$ and type $(0, \frac {n-(n-2|A|)^{2}}{2})$.
\end{Proposition}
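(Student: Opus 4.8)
The plan is to verify the two conditions of Theorem 4.3 (the combinatorial characterization), which, after checking the technical hypothesis $|b|<\binom{n}{2}$, requires only that $G(I)$ be upper perfect and that $|G(I)|=\tfrac12(\binom{n}{2}-b)$; the parity condition will then come for free. First I would set $m=|A|$ and observe that, since the elements of $P_A$ are pairwise non-dividing square-free quadrics, we have $G(I)=P_A$. Splitting $P_A$ according to whether the two variables both lie in $A$ or both lie outside $A$ gives at once
$$|G(I)|=\binom{m}{2}+\binom{n-m}{2}.$$
A direct expansion shows this equals $\tfrac12\bigl(\binom{n}{2}-\tfrac{n-(n-2m)^2}{2}\bigr)$, which is exactly the required cardinality for $b=\tfrac{n-(n-2m)^2}{2}$; this is the routine algebraic check. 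Because $|G(I)|$ is an integer equal to $\tfrac12(\binom{n}{2}-b)$, the quantity $\binom{n}{2}-b$ is even, so $\binom{n}{2}$ and $b$ have the same parity, discharging condition (1) of Theorem 4.3 automatically.

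The conceptual heart is showing that $P_A$ is upper perfect, i.e. $\sqcup(P_A)=Sm(R)_3$, which amounts to proving that every square-free cubic $x_ix_jx_k$ has at least one of its three quadratic divisors $x_ix_j,\,x_ix_k,\,x_jx_k$ lying in $P_A$. I would argue this by a two-coloring pigeonhole observation: color each of the three variables black if it belongs to $A$ and white otherwise. Among three variables and two colors, two must share a color, and the corresponding pair is then a monochromatic quadric, hence an element of $P_A$ that divides $x_ix_jx_k$. This places $x_ix_jx_k$ in $\sqcup(P_A)$, and since the cubic was arbitrary, $P_A$ is upper perfect.

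Finally I would confirm the technical hypotheses so that Theorem 4.3 applies. The bound $|b|<\binom{n}{2}$ follows from $0\le (n-2m)^2$, which already gives $b\le n/2<\binom{n}{2}$, together with the range of $m$, which yields a matching lower estimate $b>-\binom{n}{2}$. I would also point out that the intended statement needs $2\le|A|\le n-2$: this guarantees that a monochromatic pair exists both inside $A$ and inside its complement, so that $\bigcup_{u\in G(I)}\supp(u)=\{x_1,\dots,x_n\}$ and the $0$ in the type $(0,b)$ is legitimate (were $A$ or its complement a singleton or empty, the support would fail to be full and the first coordinate of the type would be nonzero). With all hypotheses in place, Theorem 4.3 gives that $I$ is a quasi $f$-ideal of degree $2$ and type $(0,b)$ with $b=\tfrac{n-(n-2m)^2}{2}$, as claimed. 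The mathematical content—the pigeonhole argument—is short; the step I expect to need the most care is exactly this bookkeeping around the degenerate sizes of $A$ and the verification that $|b|<\binom{n}{2}$.
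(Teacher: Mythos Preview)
Your proof follows the same route as the paper's: both invoke Theorem 4.3 after computing $|G(I)|=\binom{m}{2}+\binom{n-m}{2}=\tfrac12\bigl(\binom{n}{2}-b\bigr)$ and asserting that $G(I)$ is (upper) perfect, though you supply the pigeonhole argument and the check $|b|<\binom{n}{2}$ that the paper leaves as ``easy to see.'' Your remark that one really needs $2\le |A|\le n-2$ for the support of $G(I)$ to be full is a correct refinement that the paper's statement and proof overlook.
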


\begin{proof}
It is easy to see that $G(I)=W_{A}$ is a perfect set of $Sm(R)_2$. Moreover, as
$|W_{A}|={|A|\choose 2}+{{n-|A|}\choose 2} =\frac
{2|A|^2-2n|A|-n+n^2}{2}=\frac {1}{2}({n\choose 2}-\frac
{n-(n-2|A|)^{2}}{2})$, the theorem $4.3$ shows that $I$ is a quasi $f$-ideal
 of the type $(0, \frac {n-(n-2|A|)^{2}}{2})$.
\end{proof}

\begin{Proposition}{ \em Let $b$ be an integer satisfying the inequality of Proposition 4.4. Then the pair $(0,b)$ can be realized as the type of some quasi $f$-ideal provided $b$ and $n \choose 2$ have same parity.}
\end{Proposition}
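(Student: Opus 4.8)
The plan is to reduce the statement to a counting question about perfect sets and then settle it by enlarging a minimal perfect set. By Theorem 4.3, together with the standing full-support convention of this section (which forces $G(I)$ to be lower perfect, so that upper perfect becomes perfect), an equigenerated degree-$2$ ideal $I$ with full support is a quasi $f$-ideal of type $(0,b)$ precisely when $G(I)$ is a perfect subset of $Sm(R)_2$ of cardinality $r:=\frac{1}{2}({n\choose 2}-b)$. The hypothesis that $b$ and ${n\choose 2}$ share parity makes $r$ an integer, and the bounds of Proposition 4.4 translate exactly into $N(n,2)\leq r\leq{n\choose 2}-1$ (the upper endpoint $b={n\choose 2}-2N(n,2)$ giving $r=N(n,2)$, and the lower endpoint $b=-{n\choose 2}+2$ giving $r={n\choose 2}-1$).

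Given such a $b$, I would set $r=\frac{1}{2}({n\choose 2}-b)$ and invoke the definition of the perfect number to fix a perfect set $T_0\subseteq Sm(R)_2$ with $|T_0|=N(n,2)$. Since $r\geq N(n,2)$ and $|Sm(R)_2|={n\choose 2}>r$, I can adjoin $r-N(n,2)$ further square-free quadrics from $Sm(R)_2\setminus T_0$ to obtain a set $T$ with $|T|=r$. Appealing to the fact recorded in Section 2 that any subset of $Sm(R)_2$ containing a perfect set is again perfect, the set $T$ is perfect; hence it is upper perfect and has full support, and Theorem 4.3 makes $I=\langle T\rangle$ a quasi $f$-ideal of type $(0,b)$. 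This realizes the pair $(0,b)$.

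I do not expect a genuine obstacle; the substance lies in the two reductions and in the bookkeeping between $b$ and $r$. The one point I would be careful about is why the cardinality $r={n\choose 2}$ (equivalently $b=-{n\choose 2}$) is legitimately excluded: taking $T=Sm(R)_2$ produces the ideal of all square-free quadrics, whose height is $n-1$ rather than $n-2$, so it fails to be a degree-$2$ quasi $f$-ideal, and the lower bound $b\geq-{n\choose 2}+2$ of Proposition 4.4 is exactly what keeps $r$ strictly below ${n\choose 2}$, so that there is always at least one quadric left to omit. The parity hypothesis is likewise essential, since otherwise $r$ is not an integer and no generating set of the prescribed size exists.
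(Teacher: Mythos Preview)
Your argument is correct and follows the same underlying strategy as the paper: start from a perfect subset of $Sm(R)_2$ of minimum size and enlarge it to the prescribed cardinality $r=\frac{1}{2}\bigl(\binom{n}{2}-b\bigr)$, using that any superset of a perfect set is perfect. The difference is in execution. The paper builds the starting perfect set explicitly as $W_A$ (Proposition~4.6) for a balanced bipartition $A$, and then runs a case analysis on $n\bmod 4$ to verify that $|W_A|$ equals $N(n,2)$, so that $|W_A|\le r$. You bypass this by invoking the definition of $N(n,2)$ directly: a perfect set $T_0$ of size $N(n,2)$ exists by definition, and the inequality $r\ge N(n,2)$ is exactly the upper bound on $b$ from Proposition~4.4, so the enlargement step is immediate and no case analysis is needed. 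What the paper's approach buys is an explicit, self-contained construction that does not rely on the existence result hidden in the definition of $N(n,2)$ (which in turn rests on \cite[Lemma~3.3]{tswuPubl}); what your approach buys is brevity and a cleaner logical dependence on the already-stated facts in Section~2.
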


\begin{proof} If $n \choose 2$ is even, then either $n=4t$ or $n=4t+1$. Similarly, if $n \choose 2$ is odd, then either $n=4t+2$ or $n=4t+3$. We consider each situation case by case.\\
Case $(i)$: $\underline{n=4t}$. For the subset $A=\{x_1,x_2,\ldots, x_{2t}\}$, we claim that the ideal $I=<W_A \cup D>$ is quasi $f$-ideal of type $(0,b)$, where $D$ is any subset of $Sm(R)_2 - W_A$ such that $|D|={\frac {1}{2}}({{n\choose 2}-b}) - |W_A|$. It is enough to show that
 ${\frac {1}{2}}({{n\choose 2}-b}) \geq |W_A|$, because only then we can form such $D$. Note that  $|W_{A}|= 2{{2t}\choose 2}=4t^2-2t$. The expression ${\frac {1}{2}}({{n\choose 2}-b})$ assumes the least value when $b$ takes the largest value which is $b={n\choose 2}-2N(n,2)$. Now as $n=4t$ is even, the lemma 3.3 of \cite{tswuPubl} tells that $b={4t\choose 2}-2 (4t^{2}-2t)=2t$. This means that ${\frac {1}{2}}({{n\choose 2}-b})={\frac {1}{2}}({{4t\choose 2}-2t})= 4t^{2}-2t=|W_{A}|$. Consequently, if $b<{n\choose 2}-2N(n,2)$, then we have strict inequality ${\frac {1}{2}}({{n\choose 2}-b})>|W_A|.$    \\
 Case $(ii)$: $\underline{n=4t+1}$. Consider the same subset $A=\{x_1,x_2,\ldots, x_{2t}\}$ and the same ideal $I=<W_A \cup D>$, where $D$ is any subset of $Sm(R)_2 - W_A$ such that $|D|={\frac {1}{2}}({{n\choose 2}-b}) - |W_A|$. In this case, $|W_{A}|= {{2t}\choose 2}+ {{2t+1}\choose 2}=4t^2$. Once again the least value of the expression ${n\choose 2}-2N(n,2)$ is assumed for the largest $b={n\choose 2}-2N(n,2)$. Now as $n=4t+1$ is odd, we have $b={n\choose 2}-2N(n,2)$ = ${{4t+1}\choose 2}-2 (4t^{2})=2t$. This means that ${\frac {1}{2}}({{n\choose 2}-b})={\frac {1}{2}}({{4t+1\choose 2}-2t})= 4t^{2}=|W_{A}|$. Finally, we have ${\frac {1}{2}}({{n\choose 2}-b})>|W_A|$ for each $b$ which is strictly less then ${n\choose 2}-2N(n,2)$. Similarly, taking $A=\{x_1,x_2,\ldots, x_{2t+1}\}$, it can be shown that ${\frac {1}{2}}({{n\choose 2}-b}) \geq |W_A|$ for the cases $n=4t+2$ and $n=4t+3$.
Now choosing any subset $D$ of $Sm(R)_2 - W_A$ such that $|D|={\frac {1}{2}}({{n\choose 2}-b}) - |W_A|$ and setting $G(I)={W_{A}}\cup D$ gives us quasi $f$-ideal $I$ of type $(0,b)$.
\end{proof}

\begin{Example}{\em
For $n=8$, we have $-26\leq b \leq 4$. In order to construct quasi $f$-ideal $I$ of type (say) $(0,-6)$, set $A=\{x_{1},x_{2},x_{3},x_{4}\}$. Then
$W_{A}=\{x_{1}x_{2},x_{1}x_{3},x_{1}x_{4},x_{2}x_{3},x_{2}x_{4},\\x_{3}x_{4},x_{5}x_{6},x_{5}x_{7},x_{5}x_{8},x_{6}x_{7},x_{6}x_{8},x_{7}x_{8}\}.$
Note that $|W_{A}|=12$. Also, as $|G(I)|$ should be equal to  ${\frac
{1}{2}}({8\choose 2}-(-6))=17$, so we can choose any subset $D$ of $Sm(R)_2 - W_A$ consisting of five elements. Let it be
$D=\{x_{1}x_{6},x_{2}x_{7},x_{2}x_{8},x_{3}x_{7}, x_{4}x_{7}\}.$ Then
$I=\langle {W_{A}}\cup D \rangle$ is a quasi $f$-ideal of degree $2$ and type
$(0,-6)$, because $G(I)$ is perfect and $|G(I)|=17={\frac
{1}{2}}({8\choose 2}-(-6)).$ Note that for any even $b$ satisfying $-26\leq b \leq 4$, we can construct quasi $f$-ideal of type (0,b) following the same procedure. }
\end{Example}

\begin{Theorem}
Let $I$ be a quasi $f$-ideal in $R=k[x_{1},x_{2},...,x_{n}]$
of the type $(0,b).$ Then the Hilbert function of $R/I$ is given by
$$H(S/I,z)=\frac {1}{4}\{(n^2-n+2b)z-n^2+5n-2b\}$$
\end{Theorem}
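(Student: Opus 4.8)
The plan is to read off the Hilbert series of the Stanley--Reisner ring $R/I$ from the $f$-vector of the non-face complex $\delta_{\mathcal{N}}(I)$ via \cite[Theorem 6.7.2]{v}, and then extract the Hilbert function coefficient in each positive degree. The whole argument reduces to first pinning down the $f$-vector $(f_0,f_1)$ of $\delta_{\mathcal{N}}(I)$ and then feeding it into the standard $f$-vector-to-Hilbert-series recipe.

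First I would fix the $f$-vector of the non-face complex. Since $I$ is a quasi $f$-ideal of degree $2$ and type $(0,b)$, Theorem 4.1 gives $ht(I)=n-2$, whence $dim(\delta_{\mathcal{N}}(I))=n-ht(I)-1=1$, so $\delta_{\mathcal{N}}(I)$ is a graph. Fullness of the support forces $f_0(\delta_{\mathcal{N}}(I))=n$, and by \cite[Lemma 3.2]{deg2} together with $|G(I)|=\frac{1}{2}({n\choose 2}-b)$ from Theorem 4.1 one computes
$$f_1(\delta_{\mathcal{N}}(I))={n\choose 2}-|G(I)|=\frac{1}{2}\Big({n\choose 2}+b\Big)=\frac{n^2-n+2b}{4}.$$
Thus the $f$-vector is $\big(n,\ \tfrac{n^2-n+2b}{4}\big)$.

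Next I would apply \cite[Theorem 6.7.2]{v} with $d=2$ and $f_{-1}=1$ to write the Hilbert series
$$H(R/I,t)=1+\frac{nt}{1-t}+\frac{f_1\,t^2}{(1-t)^2}.$$
Expanding each summand as a power series and collecting the coefficient of $t^z$ for $z\ge 1$ gives the Hilbert function $H(R/I,z)=n+f_1(z-1)$. Equivalently, I could argue combinatorially that a face of dimension $j$ contributes ${z-1\choose j}$ monomials in degree $z$, so $H(R/I,z)=f_0{z-1\choose 0}+f_1{z-1\choose 1}=n+f_1(z-1)$, bypassing the series expansion altogether. Substituting $f_1=\frac{n^2-n+2b}{4}$ and simplifying then yields
$$H(R/I,z)=n+\frac{n^2-n+2b}{4}(z-1)=\frac{1}{4}\{(n^2-n+2b)z-n^2+5n-2b\},$$
which is the asserted formula.

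I do not expect a genuine obstacle here: the computation is routine bookkeeping once the $f$-vector is known, and all the structural input ($ht(I)=n-2$, the value of $|G(I)|$, and the one-dimensionality of $\delta_{\mathcal{N}}(I)$) is already secured by Theorem 4.1. The only point deserving care is that the displayed linear expression is the Hilbert function only for $z\ge 1$; since $dim(R/I)=2$, this linear form is in fact the Hilbert polynomial, and it agrees with the actual Hilbert function in every positive degree, while the degree-$0$ value is $1$ rather than $n-f_1$.
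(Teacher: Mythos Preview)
Your proof is correct and follows essentially the same route as the paper: compute the $f$-vector of $\delta_{\mathcal{N}}(I)$ (the paper writes $f_1=|G(I)|+b$, which equals your $\tfrac{n^2-n+2b}{4}$) and then apply the standard formula $H(R/I,z)=\sum_i f_i\binom{z-1}{i}$, which is exactly \cite[Proposition~6.7.3]{v}. Your extra remarks on the $z\ge 1$ range and the Hilbert polynomial are accurate refinements, but the core argument is the same as the paper's.
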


\begin{proof}
Suppose that $I$ is a quasi $f$-ideal of the type $(0,b)$, hence, $f_{1}(\delta _{\mathcal{N}}(I))=|G(I)|+b$. Therefore,
$f(\delta _{\mathcal{N}}(I))=(n,|G(I)|+b)$. By \cite[Proposition 6.7.3]{v}, the Hilbert function of
$R/I$ is given by $$H(K[\Delta],z)=\sum_{i=0}^{d}{{z-1}\choose
i}f_{i}={{z-1}\choose 0}f_{0}+{{z-1}\choose
1}f_{1}=n+(z-1)(|G(I)|+b)$$ On simplification, we get
$$H(S/I,z)=n+(z-1)(\frac {n(n-1)}{4}+{\frac {1}{2}}b)=\frac
{1}{4}\{(n^2-n+2b)z-n^2+5n-2b\}$$
\end{proof}

\begin{Theorem}
Let $I$ be a quasi $f$-ideal in $R=k[x_{1},x_{2},...,x_{n}]$
of the type $(0,b).$ Then the Hilbert series of $R/I$ is given by
$$F(R/I,z)=\frac {4+4(n-2)z+(n^2-5n+4+2b)z^{2}}{4(1-z)^{2}}$$
\end{Theorem}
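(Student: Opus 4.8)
The plan is to compute the Hilbert series directly from the Hilbert function already established in Theorem 4.9, using the standard fact that for a graded module the Hilbert series is the generating function $F(R/I,z)=\sum_{j\ge 0} \dim_k (R/I)_j\, z^j$. Since $I$ is equigenerated in degree $2$, in low degrees $R/I$ agrees with the polynomial ring: $(R/I)_0$ has dimension $1$ and $(R/I)_1$ has dimension $n$, while for $j\ge 1$ the Hilbert function is the quasi-polynomial $H(R/I,z)=\tfrac14\{(n^2-n+2b)z-n^2+5n-2b\}$ from Theorem 4.9. The main idea is thus to separate the series into the genuine terms for $j=0,1$ and the polynomial-valued tail $j\ge 2$, then sum the tail in closed form.

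First I would write $F(R/I,z)=1+nz+\sum_{j\ge 2} H(R/I,j)\,z^j$, where $H(R/I,j)=\alpha j+\beta$ with $\alpha=\tfrac14(n^2-n+2b)$ and $\beta=\tfrac14(-n^2+5n-2b)$. Next I would use the two standard geometric-series identities $\sum_{j\ge 0} z^j=\tfrac{1}{1-z}$ and $\sum_{j\ge 0} j z^j=\tfrac{z}{(1-z)^2}$ to evaluate $\sum_{j\ge 2}(\alpha j+\beta)z^j$; equivalently, I would subtract the $j=0$ and $j=1$ contributions from the full sums. This produces a rational function with denominator $(1-z)^2$, and after combining it with the explicit $1+nz$ correction I would clear denominators over the common factor $4(1-z)^2$.

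The final step is a routine simplification: I would collect the numerator as a quadratic in $z$ and verify that the constant term is $4$, the linear coefficient is $4(n-2)$, and the coefficient of $z^2$ is $n^2-5n+4+2b$, matching the claimed formula. A useful sanity check built into the computation is that setting $b=0$ should recover the known Hilbert series of $R/I$ for an $f$-ideal of degree $2$, and that substituting $z=1$ behavior is consistent with the dimension being $2$ (so the pole at $z=1$ has order exactly $2$, agreeing with $\dim R/I=n-\height(I)=2$).

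I do not anticipate a serious conceptual obstacle here, since the entire content is generating-function bookkeeping once Theorem 4.9 is granted. The only point demanding care is the boundary: the quasi-polynomial $H(R/I,j)$ computed in Theorem 4.9 is valid for $j\ge 1$ but must be checked against the true values $\dim (R/I)_0=1$ and $\dim(R/I)_1=n$ before being summed, and one must confirm that the degree-$2$ equigeneration guarantees $H(R/I,j)$ is exactly linear for all $j\ge 2$ with no further corrections. Handling these small-degree terms correctly is what produces the precise numerator $4+4(n-2)z+(n^2-5n+4+2b)z^2$; a careless summation that applies the linear formula uniformly from $j=0$ would shift the low-order coefficients.
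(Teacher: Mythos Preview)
Your argument is correct, but it takes a different route from the paper. The paper does not pass through Theorem~4.9 at all: it applies \cite[Theorem 6.7.2]{v} directly, which expresses the Hilbert series of a Stanley--Reisner ring in terms of the $f$-vector of $\delta_{\mathcal N}(I)$ as $F(R/I,z)=\sum_{i=-1}^{d}\frac{f_i z^{i+1}}{(1-z)^{i+1}}$. Since $f(\delta_{\mathcal N}(I))=(n,|G(I)|+b)$, one immediately gets $1+\frac{nz}{1-z}+\frac{(|G(I)|+b)z^2}{(1-z)^2}$, and putting this over the common denominator $(1-z)^2$ and substituting $|G(I)|=\tfrac12\bigl(\binom{n}{2}-b\bigr)$ yields the stated numerator in one line. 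Your approach instead sums the generating function of the Hilbert function term by term; this is more elementary in that it does not invoke the Stanley--Reisner series formula, but it costs you the extra bookkeeping with geometric-series identities and the low-degree corrections. One small clarification: the fact that $H(R/I,j)$ is exactly linear for all $j\ge 1$ is a consequence of $\dim\delta_{\mathcal N}(I)=1$ (equivalently $\height(I)=n-2$), not of equigeneration in degree~$2$ per se, so your caveat about ``no further corrections'' is resolved by the height condition already built into the definition of a quasi $f$-ideal of type $(0,b)$.
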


\begin{proof}
Suppose that $I$ is a quasi $f$-ideal of the type $(0,b)$, the
definition implies that $f_{1}(\delta _{\mathcal{N}}(I))- f_{1}(\delta
_{\mathcal{F}}(I))=b$, which further means that $f_{1}(\delta
_{\mathcal{N}}(I))=|G(I)|+b$. Therefore, $f(\delta
_{\mathcal{N}}(I))=(n,|G(I)|+b)$. Now using \cite[Theorem 6.7.2]{v}, we have that
$$F(R/I,z)=\frac {f_{-1}z^0}{(1-z)^0}+\frac
{f_{0}z^1}{(1-z)^{1}}+\frac {f_{1}z^2}{(1-z)^2}$$
$$=1+\frac
{nz}{(1-z)}+\frac {(|G(I)|+b)z^2}{(1-z)^2}$$
and after simplification, we
get $$F(R/I,z)=\frac {1+(n-2)z+(1-n+|G(I)|+b)z^{2}}{(1-z)^{2}}$$
which implies the desired formula.
\end{proof}

\end{document}